\newtheorem{theorem}{Theorem}[section]
\newtheorem{lemma}[theorem]{Lemma}
\newcommand{\pp}{\partial_2}
\newcommand{\pn}{\partial_1}
\newcommand{\new}{\newcommand*}\new{\rnew}{\renewcommand*}
\new{\newe}{\newenvironment*}\new{\stl}{\setlength}
\stl{\textwidth}{155mm}\stl{\textheight}{22cm}\stl{\headheight}{0cm}
\stl{\topmargin}{0cm}\stl{\oddsidemargin}{0.5cm}\stl{\evensidemargin}{0cm}
\rnew{\arraystretch}{1.1}\rnew{\baselinestretch}{0.95}
\renewcommand{\thefootnote}{\ding{73}}
\newtheorem{thm}{Theorem}
\newtheorem{lem}{Lemma}
\newtheorem{rem}{Remark}
\newcommand{\fr}{\frac}
\newcommand{\pa}{\partial}
\numberwithin{equation}{section}
\new{\sect}[1]{\section{#1}\setcounter{equation}{0}
 \setcounter{thm}{0}\setcounter{lmm}{0}\setcounter{rmk}{0} }
\begin{document}

\title{Singularity formation for the supersonic inward wave of compressible Euler equations with radial symmetry}

\author{
Geng Chen\thanks{Department of Mathematics,
University of Kansas, Lawrence, KS
66045 ({\tt gengchen@ku.edu}).}
\and
Faris A. El-Katri\thanks{Department of Mathematics,
University of Kansas, Lawrence, KS
66045 ({\tt elkatri@ku.edu}).}
\and
Yanbo Hu\thanks{Department of Mathematics, Zhejiang University of Science and Technology, Hangzhou 310023, PR China ({\tt yanbo.hu@hotmail.com}).}
\and
Yannan Shen\thanks{Department of Mathematics,
University of Kansas, Lawrence, KS
66045 ({\tt yshen@ku.edu}).}
}

\rnew{\thefootnote}{\fnsymbol{footnote}}

\date{}

\maketitle
\begin{abstract}

In this paper, we consider the singularity formation of smooth solutions for the compressible radially symmetric Euler equations. By applying the characteristic method and the
invariant domain idea, we show that, for polytropic ideal gases with $\gamma\geq3$, the smooth solution develops a singularity in finite time for a class of initial supersonic inward waves.
\end{abstract}

\begin{keywords}
Compressible Euler equations, radially symmetric flow, supersonic inward wave, singularity formation
\end{keywords}

\begin{AMS}
76N15, 35L65, 35L67
\end{AMS}

\section{Introduction}\label{S1}

The compressible radially symmetric isentropic Euler equations with $\gamma$-law pressure read that \cite{courant}
\begin{align}\label{1.1}
\begin{split}
(r^m\rho)_t+(r^m\rho u)_r=&\ 0, \\
(r^m\rho u)_t+(r^m\rho u^2)_r+ r^m p_r=&\ 0, \\
p=&\ K\rho^\gamma.
\end{split}
\end{align}
Here $t\geq0$ denotes the time coordinate, $r>0$ denotes the spatial coordinate, and $m\geq1$ is an integer. Particularity, $m=1, 2$ for flows with cylindrical or spherical symmetry, respectively.
The unknown variables have their ordinary meaning: $\rho(r,t)$ is the density, $u(r,t)$ is the particle velocity, $p(r,t)$ is the pressure and $\gamma>1$ is the adiabatic constant.

As one of the most representative models of nonlinear hyperbolic
conservation laws, the compressible Euler equations have always received great attention. The radially symmetric flow is one important flow in compressible fluid mechanics, as it occurs in many meaningful physical situations \cite{courant, Whi}. The study of the radially symmetric Euler equations has a long history, and the demonstration and construction of singularity through its classical solutions is one of the key concerns of numerous researchers. The singularity formation of smooth solution in finite time to the $2\times2$ reducible homogeneous hyperbolic systems has been widely discussed by using Lax's framework \cite{lax2} based on the characteristic method, see for example \cite{Ali, John, Kong, Liu1} for the early works.
For this type of system, the equations of Riemann invariants are homogeneous, and so the signs of the gradients on the Riemann invariants can be easily determined by initial data. For the one-dimensional isentropic Euler equations with $\gamma\geq3$, one can directly apply Lax's framework to establish
a complete dichotomy result, that is, the finite time singularity forms if and only if the initial data include compression. For the case $1<\gamma<3$, a suitable density lower bound estimate
is first needed to overcome the degeneracy of the Riccati equations caused by the degenerate of density.
In a series of papers \cite{G9, G10, CCZ, CPZ, CY, CYZ}, the first author derived the optimal time-dependent lower bound on density and established a sequence of fairly complete results on the singularity formation theory for the isentropic and non-isentropic Euler equations in one space dimension.

We also refer the reader to methods that study the singularity formation of the compressible Euler equations in multiple space dimensions, see Sideris's framework \cite{sideris1, sideris2}, the geometric framework \cite{Ch1, Ch2} etc.
There are also many recent results on the construction of shock formation that accurately describe the blowup process of the gradient of the solution, see among others \cite{Vicol1, Vicol2, Vicol3, Luk1, Luk2, Vicol5, Vicol4}.

For the radially symmetric Euler equations \eqref{1.1}, it is natural to mimic the one-dimensional case to utilize the gradient of Riemann variables. However, due to the influence of non-homogeneous terms, such Riccati system is non-homogeneous, making it complex to establish the desired invariant region then singularity formation results.
In \cite{CCW}, Cai, Chen, and Wang have been adopting such kind of Riccati system to investigate
the singularity formation for supersonic expanding wave of \eqref{1.1} with $1<\gamma<3$ by cumbersomely analyzing the complicated non-homogeneous Riccati equations. A solution $(\rho,u)$ of \eqref{1.1} is called a supersonic expanding wave if it satisfies $u>h>0$, where $h=\sqrt{p'(\rho)}$ is the local sound speed. The supersonic expanding requirement can be directly guaranteed by the invariant region of Riemann variables, which ensures the strict positivity of the two eigenvalues of \eqref{1.1} and
the boundedness of the coefficients of the Riccati equations. In a recent paper, we \cite{CEHS} introduced a pair of accurate gradient variables (called rarefaction and compression characters) for the supersonic expanding wave of \eqref{1.1} with $1<\gamma<3$ to study the global existence and singularity formation of smooth solutions. In terms of the defined rarefaction and compression characters, the corresponding Riccati equations are homogeneous, from which we demonstrated that smooth solutions with
rarefactive initial data exist global-in-time, while singularity forms in finite time when the initial data include strong compression somewhere. The idea in \cite{CEHS} comes from the fact that the stationary solution of \eqref{1.1} is neither rarefactive nor compressive. Applying the same idea, the results in \cite{CEHS} were
subsequently extended to the radially symmetric relativistic Euler equations \cite{CGH}.

In the current paper, we are interested in the singularity formation for the supersonic inward wave of the radially symmetric Euler equations \eqref{1.1}. A solution of \eqref{1.1} is called a supersonic inward wave if it satisfies $u<-h<0$. We explore the formation of singularities of smooth supersonic inward solutions in finite time. The motivation for studying this problem comes from at least two aspects. First, the singularity formation of supersonic inward waves has important significance in physics. Second, for $\gamma\geq3$, the invariant region of the expanding waves of \eqref{1.1} cannot be persisted as the eigenvalues change sign, that is an expanding wave may transform into an inward wave. For the case of expanding waves, as handled in previous papers \cite{CCW, CEHS, CGH}, one can directly employ the invariant region of Riemann variables to establish the persistence of supersonic expansion properties by the appropriate initial data. However, it is ineffective for the case of inward waves because one of the Riemann variables is strictly monotonically increasing along the characteristic, see \eqref{2.4} below, which will inevitably disrupt the supersonic inward properties. This is the main difficulty in studying the singularity formation for the supersonic inward waves.

In this paper, we consider the supersonic inward wave of system \eqref{1.1} with $\gamma\geq3$. We  still use the key gradient variables $(\alpha,\beta)$ in \cite{CEHS}. One key observation of this paper is to simplify the homogeneous Riccati equations in \cite{CEHS} on $(\alpha,\beta)$ into \eqref{2.6}. Then we introduce the weighted gradient variables $(\tilde{\alpha},\tilde{\beta})$, see \eqref{a3}, and establish a suitable invariant region for the five variables $\rho, u, \tilde{\alpha}, \tilde{\beta}$ and $u+h$ simultaneously. Specifically, when deriving the strict negative upper bound of $u+h$, we abandon the equation of Riemann variable but instead adopt its equation along the 1-characteristic, see \eqref{2.10} below. This treatment allows us to rely on the positivity of $\tilde{\alpha}$ to obtain the negativity of $u+h$ within the considered time. We show that the gradient variable $\tilde{\beta}$ develops a singularity in finite time when the initial value $-\tilde{\beta}|_{t=0}$ is sufficiently large at somewhere. The main conclusion of the paper is presented in Theorem \ref{thm1} in Section \ref{S3}.

\section{The characteristic equations}\label{S2}

We introduce the sound speed
$$
h=\sqrt{p_\rho}=\sqrt{K\gamma}\,\rho^\frac{\gamma-1}{2},
$$
as the variable to take the place of $\rho$.
For smooth solutions, system \eqref{1.1} can be written as
\begin{align}\label{2.1}
\begin{split}
h_t+uh_r+\fr{\gamma-1}{2}hu_r=& -\fr{\gamma-1}{2}\frac{m uh}{r}, \\
u_t+uu_r+\fr{2}{\gamma-1}hh_r=&\ 0,
\end{split}
\end{align}
with characteristic speeds
\begin{align}\label{2.2}
c_1=u-h,\quad c_2=u+h.
\end{align}
We further introduce the Riemann variables $w$ and $z$
\begin{align}\label{a1}
w=u+\frac{2}{\gamma-1}h,\quad z=u-\frac{2}{\gamma-1}h.
\end{align}
Denote the operators
\begin{align}\label{2.3}
\pa_1=\pa_t+c_1\pa_r,\quad \pa_2=\pa_t+c_2\pa_r.
\end{align}
We then obtain the governing equations of $(w,z)$ as the following form
\begin{align}\label{2.4}
\pa_2w=-\fr{m}{r}uh,\quad  \pa_1z=\fr{m}{r}uh.
\end{align}

Following our earlier paper \cite{CEHS}, we adopt the following pair of gradient variables
\begin{align}\label{2.5}
\begin{split}
\alpha=&-\frac{\partial_1(r^m \rho u)}{r^m \rho c_2}=u_r +\fr{2}{\gamma-1}h_r+\frac{mhu}{rc_2}, \\
\beta=&-\frac{\partial_2(r^m \rho u)}{r^m \rho c_1}=u_r -\fr{2}{\gamma-1}h_r-\frac{mhu}{rc_1}.
\end{split}
\end{align}
and the following lemma
\begin{lemma}[Lemmas 3.1 \cite{CEHS}]
\label{lemma_ric}
For smooth solution of \eqref{1.1}, we have the following Riccati type equations on $\alpha$ and $\beta$ defined in \eqref{2.5}, when $r>0$ and $c_1c_2\neq 0$,
\begin{equation}\label{beta_eq}
\pn\beta=-\frac{1+\gamma}{4}\beta^2-\frac{3-\gamma}{4}\alpha\beta+A_1\alpha-B_1\beta,
\end{equation}
and \begin{equation}\label{alphacd1}
\pp\alpha=-\frac{\gamma+1}{4}\alpha^2-\frac{3-\gamma}{4}\alpha\beta+A_2\beta-B_2\alpha,
\end{equation}
where
\begin{equation}\label{B1Def}
B_1=\frac{m}{rc_1^2}\bigg(\frac{\gamma-1}{4}u^3-\frac{1}{2}h^3
-\frac{\gamma-1}{4}u^2h+
\frac{1}{2}uh^2+
\frac{hu{c_1}}{{c_2}}(h+\frac{\gamma-1}{2}u)\bigg),
\end{equation}
\begin{equation}\label{alphacd3}
B_2=\frac{m}{rc_2^2}\bigg(\frac{\gamma-1}{4}u^3+\frac{1}{2}h^3+
\frac{\gamma-1}{4}u^2h+
\frac{1}{2}uh^2+\frac{hu{c_2}}{{c_1}}(h-\frac{\gamma-1}{2}u)\bigg),
\end{equation}
and
\begin{align}\label{a2}
A_1=\fr{mc_2}{2rc_{1}^2}\bigg(\fr{\gamma-1}{2}u^2-h^2\bigg),\quad A_2=\fr{mc_1}{2rc_{2}^2}\bigg(\fr{\gamma-1}{2}u^2-h^2\bigg).
\end{align}
\end{lemma}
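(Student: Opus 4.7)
My plan is to derive \eqref{beta_eq}--\eqref{alphacd1} by computing the characteristic derivatives of the spatial derivatives $w_r,z_r$ of the Riemann variables from the Riemann-invariant form \eqref{2.4}, and then converting to $(\alpha,\beta)$ via the definitions \eqref{2.5}. I would carry out the derivation of \eqref{alphacd1} in detail; \eqref{beta_eq} follows from the symmetric argument with the two characteristic families interchanged.

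The first step is to apply the commutator identity $\pa_r \pa_j = \pa_j \pa_r + (c_j)_r \pa_r$ (for $j=1,2$) to \eqref{2.4}. For the 2-characteristic, this gives
\begin{equation*}
\pa_2 w_r = -(c_2)_r w_r - m\pa_r\bigl(uh/r\bigr).
\end{equation*}
Inverting \eqref{a1} one has $u_r = (w_r+z_r)/2$ and $h_r = (\gamma-1)(w_r-z_r)/4$, so $(c_2)_r = u_r + h_r = \tfrac{\gamma+1}{4} w_r + \tfrac{3-\gamma}{4} z_r$. I would then substitute the identities $w_r = \alpha - mhu/(rc_2)$ and $z_r = \beta + mhu/(rc_1)$ coming from \eqref{2.5}. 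The product $(c_2)_r w_r$ immediately contributes the target quadratic part $-\tfrac{\gamma+1}{4}\alpha^2 - \tfrac{3-\gamma}{4}\alpha\beta$, while the remaining pieces are linear in $(\alpha,\beta)$ with $(u,h,r,c_1,c_2)$-dependent coefficients, together with purely geometric terms produced by $m\pa_r(uh/r)$ and by the cross terms in the expansion.

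The second step converts $\pa_2 w_r$ into $\pa_2 \alpha = \pa_2 w_r + \pa_2\bigl(mhu/(rc_2)\bigr)$. The characteristic derivatives needed for the correction term can be read directly from \eqref{2.1}: a short calculation yields the convenient identities $\pa_2 u = h z_r$, $\pa_2 h = -\tfrac{\gamma-1}{2} h z_r - \tfrac{\gamma-1}{2}\,mhu/r$, and trivially $\pa_2 r = c_2$, $\pa_2 c_2 = \pa_2 u + \pa_2 h$. Plugging in produces an extra linear-in-$\beta$ contribution (through $z_r = \beta + mhu/(rc_1)$) and further lower-order source terms. The main obstacle, and the bulk of the work, is the algebraic bookkeeping that shows the coefficient of $\alpha$ collapses to $-B_2$ as in \eqref{alphacd3}, the coefficient of $\beta$ collapses to $A_2$ as in \eqref{a2}, and every zeroth-order residual term cancels identically. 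The fact that $A_2$ carries the factor $c_1/c_2^2$ in \eqref{a2} and $B_2$ contains a $huc_2/c_1$ piece inside the bracket in \eqref{alphacd3} is the fingerprint of the required manipulations, which amount to clearing the common denominator $rc_1c_2^2$ and repeatedly using $c_1c_2 = u^2 - h^2$ and $c_2 - c_1 = 2h$.

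Finally, \eqref{beta_eq} follows from the mirror computation, starting from $\pa_1 z = muh/r$, using the commutator identity for $\pa_1$, the analogous formula $(c_1)_r = \tfrac{3-\gamma}{4} w_r + \tfrac{\gamma+1}{4} z_r$, and the relation $\pa_1 \beta = \pa_1 z_r - \pa_1\bigl(mhu/(rc_1)\bigr)$. The algebra is parallel with $c_1 \leftrightarrow c_2$ and the roles of $\alpha,\beta$ interchanged, and it yields the $A_1\alpha$ and $-B_1\beta$ terms with the exact coefficients stated in \eqref{a2} and \eqref{B1Def}.
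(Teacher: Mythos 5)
The paper itself offers no proof of this lemma: it is imported verbatim as ``Lemma 3.1'' of reference \cite{CEHS}, so there is no in-paper argument to compare against. Your outline is the natural (and very likely the original) route. The preparatory steps are all correct: the commutator identity $\pa_r\pa_j=\pa_j\pa_r+(c_j)_r\pa_r$ applied to $\pa_2w=-muh/r$ and $\pa_1z=muh/r$; the translations $w_r=\alpha-mhu/(rc_2)$, $z_r=\beta+mhu/(rc_1)$ coming from \eqref{2.5}; the formula $(c_2)_r=\tfrac{\gamma+1}{4}w_r+\tfrac{3-\gamma}{4}z_r$; and the characteristic derivatives $\pa_2u=hz_r$, $\pa_2h=-\tfrac{\gamma-1}{2}hz_r-\tfrac{\gamma-1}{2}muh/r$, $\pa_2r=c_2$, $\pa_2c_2=\pa_2u+\pa_2h$, which I verified directly from \eqref{2.1}. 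With these substitutions the leading quadratic term $-\tfrac{\gamma+1}{4}\alpha^2-\tfrac{3-\gamma}{4}\alpha\beta$ does fall out as you say.

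The gap is that you explicitly defer the entire linear and zeroth-order bookkeeping --- ``the coefficient of $\alpha$ collapses to $-B_2$\,\dots\,the coefficient of $\beta$ collapses to $A_2$\,\dots\,every zeroth-order residual term cancels'' --- which is precisely where the content of the lemma lives. The homogeneity of the resulting Riccati system (no pure source term) is not generic; it is the structural miracle produced by the specific $mhu/(rc_j)$ corrections built into the definition of $\alpha,\beta$, and it is the single feature the rest of the paper leans on. Stating that it ``cancels identically'' without exhibiting the cancellation is asserting the conclusion. To turn the sketch into a proof you must actually expand $\pa_2\bigl(mhu/(rc_2)\bigr)$ via the listed identities, collect over the common denominator $rc_1c_2^2$ using $c_1c_2=u^2-h^2$ and $c_2-c_1=2h$, and display that (i) the $\beta$-coefficient is exactly $\tfrac{mc_1}{2rc_2^2}\bigl(\tfrac{\gamma-1}{2}u^2-h^2\bigr)$, (ii) the $\alpha$-coefficient is exactly $-B_2$ with $B_2$ as in \eqref{alphacd3}, and (iii) the $\alpha,\beta$-free remainder vanishes. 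Until those three checks are written out, the argument is a correct plan rather than a proof.
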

\begin{rem}
It is noted that \eqref{beta_eq} and \eqref{alphacd1} are homogeneous, that is, their right-hand sides vanish when $\alpha$ and $\beta$ both vanish. This crucial property is based on the special construction of $(\alpha,\beta)$ in \eqref{2.5}, which essentially originates from the fact that the stationary solution of \eqref{1.1} is neither rarefactive nor compressive.
\end{rem}
\bigskip

Here we notice that, by simple calculations, the coefficients $B_1$ and $B_2$ can be simplified, such
that the Riccati equations of $(\alpha,\beta)$ can be written as
\begin{align}\label{2.6}
\begin{split}
\pa_1\beta=-\fr{\gamma+1}{4}\beta^2+\fr{\gamma-3}{4}\alpha\beta+A_1(\alpha-\beta) +\fr{(\gamma-3)m}{r}\fr{u^2h^2}{c_{1}^2c_2}\beta, \\
\pa_2\alpha=-\fr{\gamma+1}{4}\alpha^2+\fr{\gamma-3}{4}\alpha\beta+A_2(\beta-\alpha) +\fr{(\gamma-3)m}{r}\fr{u^2h^2}{c_{2}^2c_1}\alpha.
\end{split}
\end{align}
In fact, to prove \eqref{2.6} by Lemma \ref{lemma_ric}, we simply use two equations
\begin{align*}
&\frac{c_2^2}{2}(\fr{\gamma-1}{2}u^2-h^2)-
c_2\bigg(\frac{\gamma-1}{4}u^3-\frac{1}{2}h^3
-\frac{\gamma-1}{4}u^2h+
\frac{1}{2}uh^2\bigg)-
hu c_1 (h+\frac{\gamma-1}{2}u) \\
=&\frac{c_2^2}{2}(\fr{\gamma-1}{2}u^2-h^2)-
\fr{c_1c_2}{2}(\frac{\gamma-1}{2}u^2+h^2)-
hu c_1 (h+\frac{\gamma-1}{2}u) \\
=&\frac{\gamma-1}{2}u^2c_2h -h^2c_2 u-
hu c_1 (h+\frac{\gamma-1}{2}u) \\
=&\frac{\gamma-1}{2}u^2h(c_2-c_1) -h^2u(c_2+c_1)
\\
=&(\gamma-3)u^2h^2,
\end{align*}
and
\begin{align*}
&\frac{c_1^2}{2}(\fr{\gamma-1}{2}u^2-h^2)-
c_1\bigg(\frac{\gamma-1}{4}u^3+\frac{1}{2}h^3
+\frac{\gamma-1}{4}u^2h+
\frac{1}{2}uh^2\bigg)-
hu c_2 (h-\frac{\gamma-1}{2}u) \\
=&\frac{c_1^2}{2}(\fr{\gamma-1}{2}u^2-h^2)-
\fr{c_1c_2}{2}(\frac{\gamma-1}{2}u^2+h^2)-
hu c_2 (h-\frac{\gamma-1}{2}u) \\
=&-\frac{\gamma-1}{2}u^2c_1h -h^2c_1 u-
hu c_2 (h-\frac{\gamma-1}{2}u) \\
=&\frac{\gamma-1}{2}u^2h(c_2-c_1) -h^2u(c_1+c_2)
\\
=&(\gamma-3)u^2h^2.
\end{align*}

Observing the simplified version Riccati system \eqref{2.6} provides the basis for the singularity formation result in this paper.

 Moreover, we introduce the weighted gradient variables
\begin{align}\label{a3}
\tilde\alpha=h^{\fr{\gamma-3}{2(\gamma-1)}}\alpha,\quad \tilde\beta=h^{\fr{\gamma-3}{2(\gamma-1)}}\beta.
\end{align}
By direct calculations, one gains
\begin{align}\label{a4}
\begin{split}
\pa_1\tilde{\beta}=&-\fr{\gamma+1}{4}h^{-\fr{\gamma-3}{2(\gamma-1)}}\tilde{\beta}^2 +\fr{mc_2}{2rc_{1}^2}\bigg(\fr{\gamma-1}{2}u^2-h^2\bigg)\tilde{\alpha} \\ & -\fr{m[(\gamma-3)u^2+(u+h)^2]}{2rc_1}\tilde{\beta}, \\
\pa_2\tilde{\alpha}=&-\fr{\gamma+1}{4}h^{-\fr{\gamma-3}{2(\gamma-1)}}\tilde{\alpha}^2 +\fr{mc_1}{2rc_{2}^2}\bigg(\fr{\gamma-1}{2}u^2-h^2\bigg)\tilde{\beta} \\ & -\fr{m[(\gamma-3)u^2+(u-h)^2]}{2rc_2}\tilde{\alpha}.
\end{split}
\end{align}
System \eqref{a4} can also derived by taking $\lambda=-\fr{\gamma-3}{2(\gamma-1)}$ in Lemma 6.1 in \cite{CEHS} and then doing a series of simplifications.

\begin{rem}
It is observed that the two equations in system \eqref{a4} are decoupled in its leading quadratic order term. Moreover, the coefficients of their first-order terms can be easily determined, which is crucial for our analysis below.
\end{rem}

It suggests by \eqref{2.5} that
\begin{align}\label{2.7}
h_r=\fr{\gamma-1}{4}\left(\alpha-\beta-\fr{2mu^2h}{rc_1c_2},\right)\quad u_r=\fr{\alpha+\beta}{2}+\fr{muh^2}{rc_1c_2},
\end{align}
from which and \eqref{2.1}, \eqref{a3}, we acquire the following equations
\begin{align}\label{2.8}
\pa_1h=&h_t+(u-h)h_r \notag \\
=&-hh_r-\fr{\gamma-1}{2}hu_r-\fr{\gamma-1}{2}\fr{muh}{r} \nonumber \\
=&-\fr{\gamma-1}{2}h\bigg(w_r+\fr{mu}{r}\bigg)\notag \\
=&-\fr{\gamma-1}{2}h\bigg(\alpha+\fr{mu^2}{rc_2}\bigg) \notag \\
=&-\fr{\gamma-1}{2}h^{\fr{\gamma+1}{2(\gamma-1)}}\bigg(\tilde\alpha +\fr{mu^2}{rc_2}h^{\fr{\gamma-3}{2(\gamma-1)}}\bigg),
\end{align}
and
\begin{align}\label{2.9}
\pa_1u=&u_t+(u-h)u_r \notag \\
=&-hu_r-\fr{2}{\gamma-1}h_r =-hw_r \nonumber \\
=&-h\bigg(\alpha-\fr{muh}{rc_2}\bigg) \notag \\
=&-h^{\fr{\gamma+1}{2(\gamma-1)}}\bigg(\tilde\alpha-\fr{muh}{rc_2}h^{\fr{\gamma-3}{2(\gamma-1)}}\bigg),
\end{align}
and
\begin{align}\label{2.10}
\pa_1c_2=&\pa_1\bigg(z+\fr{\gamma+1}{\gamma-1}h\bigg) \notag \\
=&\fr{muh}{r} +\fr{\gamma+1}{\gamma-1}\bigg(-\fr{\gamma-1}{2}hu_r-\fr{\gamma-1}{2}\fr{muh}{r}-hh_r\bigg) \notag \\
=&\fr{muh}{r} -\fr{\gamma+1}{2}h\bigg(w_r+\fr{mu}{r}\bigg) \notag \\
=&-\fr{\gamma+1}{2}h\bigg(\alpha+\fr{mu[(\gamma-1)u-2h]}{(\gamma+1)rc_2}\bigg) \notag \\
=&-\fr{\gamma+1}{2}h^{\fr{\gamma+1}{2(\gamma-1)}}\bigg(\tilde\alpha +\fr{mu[(\gamma-1)u-2h]}{(\gamma+1)rc_2}h^{\fr{\gamma-3}{2(\gamma-1)}}\bigg).
\end{align}

\section{Proof of the main result}\label{S3}

This section is devoted to showing the main result of the paper. The key technique is to construct a suitable invariant domain for variables $u, h, \tilde{\alpha}, \tilde{\beta}$ and $c_2$ at an appropriately small time.

Let $0<r_0<r_1<r_2$ be any three positive numbers. Assume that the initial data $(h(r,0),u(r,0))=(h_0(r), u_0(r))$ satisfy that
\begin{align}\label{3.1}
0<\underline{h}< h_0(r)< \overline{h}< \fr{1}{2}\overline{u},\quad -\underline{u}< u_0(r)< -\overline{u}<0,\quad \forall\ r\in[r_1,r_2],
\end{align}
for some positive constants $\underline{h}, \overline{h}, \underline{u}$ and $\overline{u}$. Denote
\begin{align}\label{3.2}
\begin{split}
\tilde{\alpha}_0(r)=&\bigg(u_{0}'(r)+h_{0}'(r)+\fr{mh_0(r)u_0(r)}{r[u_0(r)+h_0(r)]}\bigg) h_0(r)^{\fr{\gamma-3}{2(\gamma-1)}},\\ \tilde{\beta}_0(r)=&\bigg(u_{0}'(r)-h_{0}'(r)-\fr{mh_0(r)u_0(r)}{r[u_0(r)-h_0(r)]}\bigg) h_0(r)^{\fr{\gamma-3}{2(\gamma-1)}}.
\end{split}
\end{align}
We suppose that $(\tilde{\alpha}_0(r),\tilde{\beta}_0(r))$ satisfy
\begin{align}\label{3.3}
\underline{\alpha}< \tilde{\alpha}_0(r)< \overline{\alpha},\quad \tilde{\beta}_0(r)< -\overline{\beta},\quad \forall\ r\in[r_1,r_2],
\end{align}
for the positive constants $\underline{\alpha}, \overline{\alpha}$ and $\overline{\beta}$ fulfilling
\begin{align}\label{3.4}
\underline{\alpha}\geq \fr{2m(\underline{u}+\overline{h})}{r_0}\overline{h}^{\fr{\gamma-3}{2(\gamma-1)}},\quad \overline{\beta}> \fr{(\gamma+1)r_2\underline{\alpha}^2}{2m\underline{u}}\underline{h}^{-\fr{\gamma-3}{2(\gamma-1)}}.
\end{align}
Denote
$$
\alpha_*=\fr{2m(\underline{u}+\overline{h})}{r_0}\overline{h}^{\fr{\gamma-3}{2(\gamma-1)}},\quad \alpha^*=\bigg(\fr{\overline{h}}{\underline{h}}\bigg)^{\fr{\gamma+1}{2(\gamma-1)}}\overline{\alpha}.
$$
We further set
$$
\widetilde{T}=\min\bigg\{\fr{r_1-r_0}{\underline{u}+2\overline{h}},\ \fr{2}{(\gamma-1)\alpha^*}\underline{h}^{\fr{\gamma-3}{2(\gamma-1)}}\bigg\},
$$
such that
\begin{align}\label{3.5}
r_1-(\underline{u}+2\overline{h})\widetilde{T}\geq r_0,\quad \underline{h}^{-\fr{\gamma-3}{2(\gamma-1)}}\alpha^*\widetilde{T}\leq \fr{2}{\gamma-1}.
\end{align}

\begin{figure}[htbp]
\begin{center}
\includegraphics[scale=0.6]{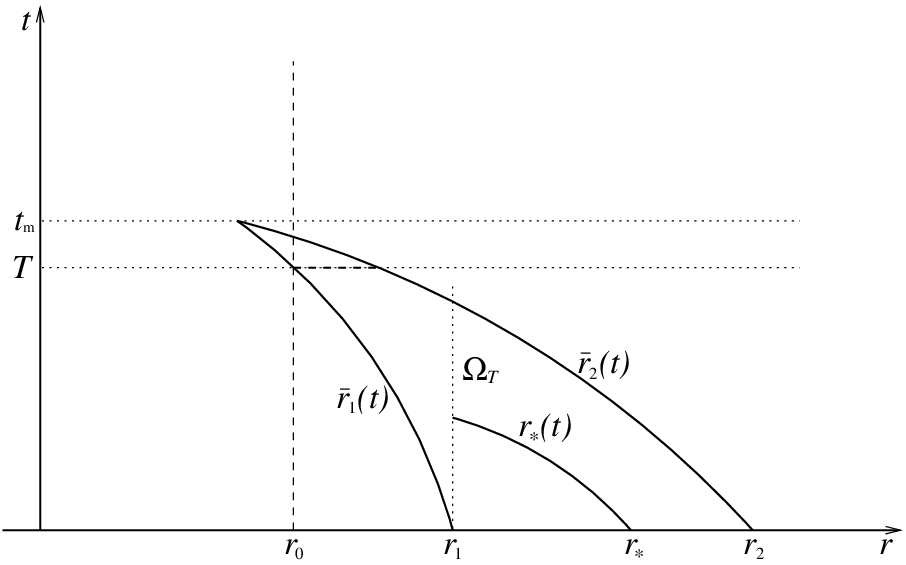}
\caption{\footnotesize The region $\Omega_T$.}
\label{fig1}
\end{center}
\end{figure}

Let $r=\bar{r}_2(t)$ and $r=\bar{r}_1(t)$ be the 1- and 2-characteristic curves through points $(r_2,0)$ and $(r_1,0)$, respectively, that is
\begin{align}\label{3.6}
\left\{
\begin{array}{l}
\fr{{\rm d}\bar{r}_1(t)}{{\rm d}t}=(u+h)(\bar{r}_1(t),t),\\
\bar{r}_1(0)=r_1,
\end{array}
\right.
\quad
\left\{
\begin{array}{l}
\fr{{\rm d}\bar{r}_2(t)}{{\rm d}t}=(u-h)(\bar{r}_2(t),t),\\
\bar{r}_2(0)=r_2.
\end{array}
\right.
\end{align}
We denote the intersection time of $r=\bar{r}_2(t)$ and $r=\bar{r}_1(t)$ by $t_m$ and set
\begin{align}\label{3.7}
\Omega_T=\{(r,t)|\ \bar{r}_1(t)\leq r\leq \bar{r}_2(t), 0\leq t\leq T\},
\end{align}
where $T\leq\min\{\widetilde{T},t_m\}$. See Fig. \ref{fig1} for illustration.

We have the following key lemma.
\begin{lem}\label{lem1}
Let $\gamma\geq3$. Assume that $(h,u)(r,t)$ is a smooth solution of \eqref{2.1} with initial data $(h(r,0),u(r,0))=(h_0(r), u_0(r))$. Let the initial data $(h_0(r), u_0(r))$ satisfy the conditions \eqref{3.1}, \eqref{3.3} and \eqref{3.4} on the interval $[r_1,r_2]$. Then there hold
\begin{align}\label{3.8}
\begin{split}
-(\underline{u}+\overline{h})< u(r,t)< -\overline{u},\quad e^{-1}\underline{h}< h(r,t)<\overline{h}, \\ c_2< -\fr{1}{2}\overline{u}, \quad
\alpha_*< \tilde{\alpha}(r,t)<\alpha^*,\quad \tilde{\beta}(r,t)< -\overline{\beta},
\end{split}
\quad \ \ \forall\ (r,t)\in\Omega_T.
\end{align}
\end{lem}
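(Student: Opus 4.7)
The plan is to run a bootstrap / continuous induction on time. Let $T^\ast\in(0,T]$ denote the supremum of times $\tau$ such that all five strict inequalities in \eqref{3.8} hold on $\Omega_\tau$; the strict initial inequalities \eqref{3.1}, \eqref{3.3} together with continuity of smooth solutions give $T^\ast>0$, so the goal is to show that if all bounds hold on $\Omega_{T^\ast}$ then each in fact remains strict at $t=T^\ast$, ruling out $T^\ast<T$. A geometric preliminary: under the running bounds one has $c_1<c_2<0$, so backward 1- and 2-characteristics from any point in $\Omega_{T^\ast}$ stay inside $\Omega_{T^\ast}$ and terminate at $t=0$ within $[r_1,r_2]$, so the initial bounds are accessible at characteristic feet; the first constraint in \eqref{3.5} further ensures $r\ge r_0$ throughout, uniformly controlling the $1/r$ factors that appear.

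I would first dispatch the bound $\tilde\beta<-\overline\beta$, which comes from pure monotonicity. Along a 1-characteristic, each right-hand-side term in the first equation of \eqref{a4} is non-positive under the running bounds: the quadratic term is trivially so; the coefficient $A_1=\tfrac{mc_2}{2rc_1^2}(\tfrac{\gamma-1}{2}u^2-h^2)$ is negative because $c_2<0$ and because $\gamma\ge 3$ combined with $\overline h<\overline u/2$ and $|u|\ge\overline u>2h$ forces $\tfrac{\gamma-1}{2}u^2-h^2>0$, so $A_1\tilde\alpha<0$ since $\tilde\alpha>0$; the last coefficient $-\tfrac{m[(\gamma-3)u^2+(u+h)^2]}{2rc_1}$ is positive (since $c_1<0$) and multiplies the negative $\tilde\beta$. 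Hence $\pa_1\tilde\beta<0$, and integration along the backward 1-characteristic gives $\tilde\beta(r,t)\le\tilde\beta_0(\text{foot})<-\overline\beta$ strictly.

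The four remaining bounds on $\tilde\alpha,h,u,c_2$ are coupled but close in turn, using the short-time window $\widetilde T$. For $\tilde\alpha$ I integrate the $\pa_2\tilde\alpha$ equation of \eqref{a4} along 2-characteristics. For the upper bound $\tilde\alpha<\alpha^\ast$ I discard the negative quadratic, bound the positive middle and last terms by the running bounds, and apply a Gronwall-type estimate over $\widetilde T$; the specific form $\alpha^\ast=(\overline h/\underline h)^{(\gamma+1)/(2(\gamma-1))}\overline\alpha$ emerges upon rewriting the estimate in terms of the unweighted $\alpha=h^{-(\gamma-3)/(2(\gamma-1))}\tilde\alpha$ together with the running lower bound $h\ge e^{-1}\underline h$. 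For the lower bound $\tilde\alpha>\alpha_\ast$, I drop all positive terms to obtain $\pa_2\tilde\alpha\ge-\tfrac{\gamma+1}{4}h^{-(\gamma-3)/(2(\gamma-1))}\tilde\alpha^2$; a Riccati comparison with the second constraint in \eqref{3.5}, $\underline h^{-(\gamma-3)/(2(\gamma-1))}\alpha^\ast\widetilde T\le 2/(\gamma-1)$, then yields $\tilde\alpha>\alpha_\ast$. With $\tilde\alpha\in(\alpha_\ast,\alpha^\ast)$ secured, \eqref{2.8} gives $|\pa_1\ln h|$ bounded in terms of $\alpha^\ast$ and the other running quantities, and the same time constraint integrates to $e^{-1}\underline h<h<\overline h$; likewise \eqref{2.9} bounds $|\pa_1 u|$, and integration over $\widetilde T$ keeps $u$ strictly inside $(-\underline u-\overline h,-\overline u)$. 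Finally, for $c_2=u+h$ I follow the paper's hint and use \eqref{2.10}: inside the bracket, the quotient $\tfrac{mu[(\gamma-1)u-2h]}{(\gamma+1)rc_2}$ is negative (since $u<0$, $(\gamma-1)u-2h<0$ and $c_2<0$ combine that way), but the choice $\underline\alpha\ge\alpha_\ast$ in \eqref{3.4} is calibrated precisely so that the positive $\tilde\alpha$ dominates this negative contribution; hence $\pa_1 c_2<0$ and $c_2(r,t)\le c_2(0,\text{foot})<-\overline u+\overline h<-\overline u/2$ strictly, the last inequality using $\overline h<\overline u/2$ from \eqref{3.1}.

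The main obstacle I expect is the coupled closure of the upper bound on $\tilde\alpha$ with the lower bound on $h$: the $\pa_2\tilde\alpha$ equation has two sources of positive forcing (its middle and last terms), so the negative quadratic is the only sink and an upper bound on $\tilde\alpha$ is possible only within the short-time window; meanwhile the lower bound on $h$ feeds back through the factor $h^{-(\gamma-3)/(2(\gamma-1))}$ in \eqref{a4}, so the two estimates must close simultaneously. The precise values of $\alpha^\ast$, $\alpha_\ast$, and $\widetilde T$ fixed in \eqref{3.4}--\eqref{3.5} are exactly the choices that allow one Gronwall / Riccati iteration over $[0,\widetilde T]$ to close this bootstrap.
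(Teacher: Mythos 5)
The overall bootstrap framework and the $\tilde\beta$ monotonicity step match the paper, but your treatment of the two $\tilde\alpha$ bounds contains genuine gaps that the paper's argument is specifically designed to avoid.

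For the \emph{upper} bound $\tilde\alpha<\alpha^*$, you propose to discard the quadratic term in the $\pa_2\tilde\alpha$ equation of \eqref{a4} and run Gronwall on the remaining two terms using ``the running bounds.'' This cannot close: the middle term is $A_2\tilde\beta$ with $A_2<0$ and $\tilde\beta<0$, so it is a \emph{positive} forcing, and the only running control on $\tilde\beta$ is an \emph{upper} bound $\tilde\beta<-\overline\beta$ — there is no lower bound on $\tilde\beta$ (indeed $\tilde\beta$ is what blows up in Theorem~\ref{thm1}). Hence $A_2\tilde\beta$ is not bounded by the running quantities, and Gronwall gives nothing. The paper circumvents this entirely by producing a quantity that is monotone \emph{decreasing} along 2-characteristics: rewriting as in \eqref{3.22}, the coefficient in braces multiplies $(\tilde\beta-\tilde\alpha)$, which is negative no matter how negative $\tilde\beta$ becomes, and the remaining term is negative because $c_1<0$. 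One then reads off $h^{-\frac{\gamma+1}{2(\gamma-1)}}\tilde\alpha$ decreasing, evaluates at the foot of the 2-characteristic using the \emph{initial} bound $h_0>\underline h$ (not the running bound $h\ge e^{-1}\underline h$, which would spoil the stated constant $\alpha^*$), and finishes with the running bound $h<\overline h$. This is a different, and necessary, mechanism.

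For the \emph{lower} bound $\tilde\alpha>\alpha_*$, you propose to drop all positive terms and compare with $\pa_2\tilde\alpha\ge-\tfrac{\gamma+1}{4}h^{-\frac{\gamma-3}{2(\gamma-1)}}\tilde\alpha^2$. This is too lossy: the resulting Riccati bound $\tilde\alpha(t)\ge\tilde\alpha_0/(1+C\tilde\alpha_0 t)$ is strictly below $\tilde\alpha_0\ge\underline\alpha$ for $t>0$, and \eqref{3.4} allows $\underline\alpha=\alpha_*$, so you cannot conclude $\tilde\alpha>\alpha_*$ this way. The paper's argument is a pointwise barrier contradiction at the first touching time, and it hinges precisely on \emph{retaining} the positive term $A_2\tilde\beta$: since $\tilde\beta\le-\overline\beta$ and $\overline\beta$ is chosen large via the second condition in \eqref{3.4}, this positive contribution dominates the negative quadratic $-\tfrac{\gamma+1}{4}h^{-\frac{\gamma-3}{2(\gamma-1)}}\alpha_*^2$, forcing $\pa_2\tilde\alpha(P)>0$ at a putative touching point. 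You never invoke the $\overline\beta$ condition in \eqref{3.4}, which is the tell-tale sign that the coupling you identify at the end of your sketch is in fact unresolved in your argument.

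A smaller but real issue: for the \emph{upper} bounds $u<-\overline u$ and $h<\overline h$, it is not enough to bound $|\pa_1 u|$ and $|\pa_1\ln h|$, because the initial data may be arbitrarily close to $-\overline u$ and $\overline h$. The paper shows the strict \emph{sign} conditions $\pa_1 u<0$ and $\pa_1 h<0$ (using the lower bound $\tilde\alpha\ge\alpha_*$ together with the calibration in \eqref{3.4}), so $u$ and $h$ actually decrease along 1-characteristics; your symmetric ``integrate the modulus over $\widetilde T$'' phrasing misses this. Your argument for the lower bounds of $u,h$ and for $c_2<-\overline u/2$ is in the spirit of the paper.
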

\begin{rem}
This lemma provides an invariant region for variables $(h, u, c_2, \tilde{\alpha}, \tilde{\beta})$ in $\Omega_T$. See Fig. \ref{fig2} for illustration.
\end{rem}

\begin{figure}[htbp]
\begin{center}
\begin{minipage}[t]{0.38\textwidth}
\includegraphics[scale=0.48]{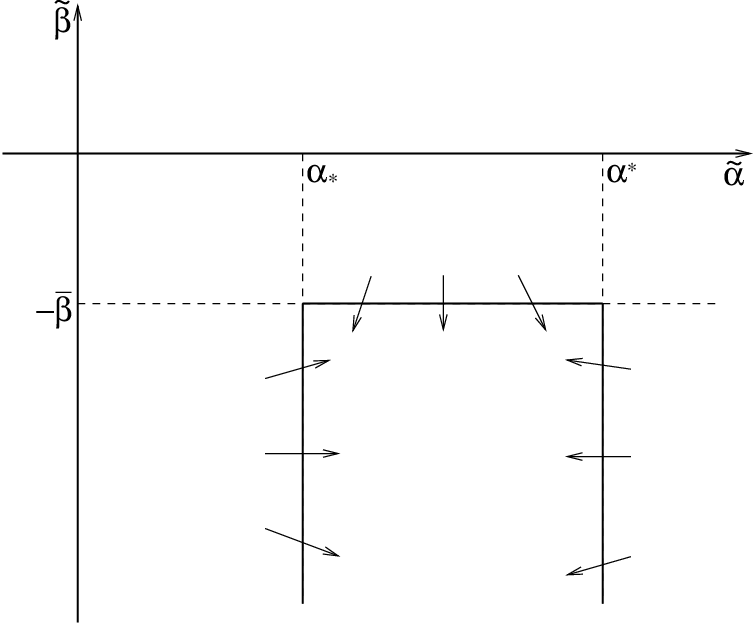}
{\center \qquad  \qquad \qquad \qquad  \  (a)}
\end{minipage}\quad
\begin{minipage}[t]{0.38\textwidth}
\includegraphics[scale=0.42]{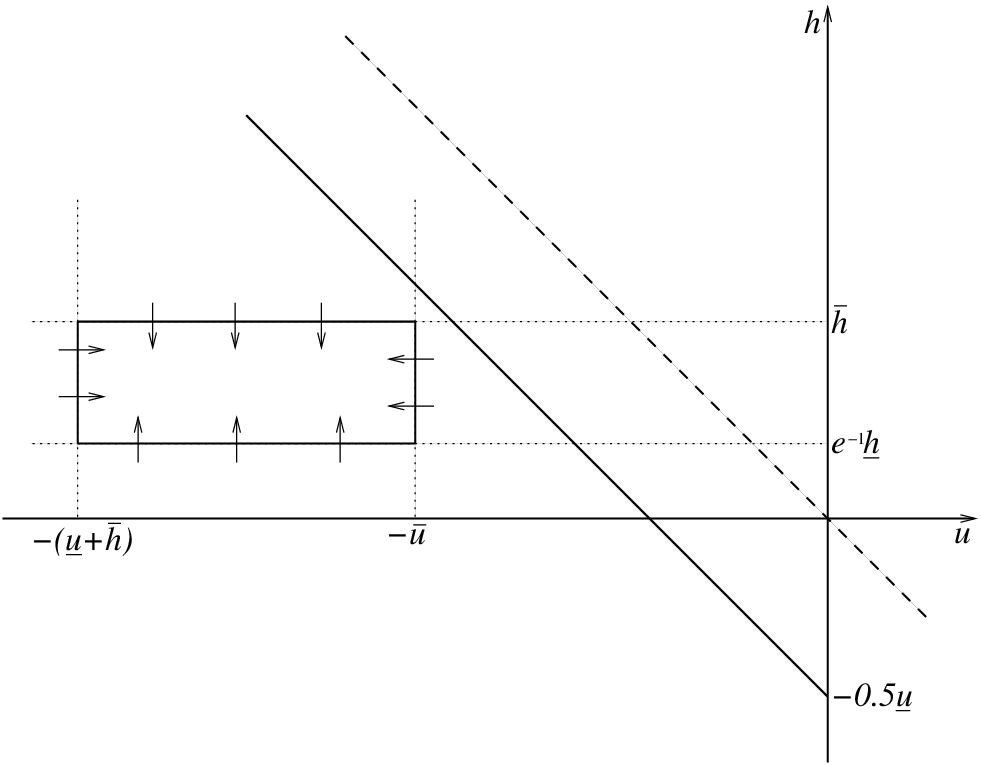}
{\center \qquad  \qquad \qquad \qquad \ (b)}
\end{minipage}
\caption{(a) The region of $(\tilde{\alpha},\tilde{\beta})$ for $(r,t)\in\Omega_T$; (b) The region of $(h,u)$ for $(r,t)\in\Omega_T$. }
\label{fig2}
\end{center}
\end{figure}

\begin{proof}
The proof of the lemma is based on the idea of contradiction.

(1) The estimates of $u$. Suppose that there exists a point $P(r^*,t^*)\in\Omega_T$ such that $u(r^*,t^*)=-\overline{u}$ and
\begin{align}\label{3.9}
\begin{split}
-(\underline{u}+\overline{h})\leq u(r,t)< -\overline{u},\quad e^{-1}\underline{h}\leq h(r,t)\leq\overline{h}, \\
c_2\leq -\fr{1}{2}\overline{u}, \quad
\alpha_*\leq\tilde{\alpha}(r,t)\leq\alpha^*,\quad \tilde{\beta}(r,t)\leq -\overline{\beta},
\end{split}
 \ \ \forall\ (r,t)\in\Omega_T\cap\{t<t^*\}.
\end{align}
This means that $u$ first touches the upper bound $-\overline{u}$ at point $P$. Thus one has
\begin{align}\label{3.10}
\pa_1u(P)\geq0.
\end{align}
We draw the 1-characteristic curve from the point $(r^*,t^*)$, denoted by $r_1(t)$, up to the line $t=0$ at a point $(\hat{r},0)$. Then we obtain by \eqref{3.5}
\begin{align}\label{3.11}
r^*=&\hat{r}+\int_{0}^{t^*}(u-h)(r_1(t),t)\ {\rm d}t  \nonumber \\
\geq & r_1-\int_{0}^{T}(\underline{u}+2\overline{h})\ {\rm d}t =r_1-(\underline{u}+2\overline{h})T\geq r_0.
\end{align}
Then it follows by \eqref{3.9} that
\begin{align}\label{3.12}
\bigg(\tilde{\alpha}-\fr{muh}{rc_2}h^{\fr{\gamma-3}{2(\gamma-1)}}\bigg)(P)\geq &\bigg(\fr{2m(\underline{u}+\overline{h})}{r_0} -\fr{m\overline{h}}{r_0}\cdot \fr{u}{c_2}(P)\bigg)\overline{h}^{\fr{\gamma-3}{2(\gamma-1)}} \nonumber \\
\geq &\bigg(\fr{2m(\underline{u}+\overline{h})}{r_0} -\fr{2m\overline{h}}{r_0} \bigg)\overline{h}^{\fr{\gamma-3}{2(\gamma-1)}} \notag \\ =&\fr{2m\underline{u}}{r_0}\overline{h}^{\fr{\gamma-3}{2(\gamma-1)}}>0.
\end{align}
Here we used the result $1\leq\fr{u}{c_2}\leq2$ by the fact $c_2=\fr{1}{2}u+(\fr{1}{2}u+h)\leq \fr{1}{2}u$.
Combining \eqref{2.9} and \eqref{3.12} yields
\begin{align*}
\pa_1u(P)=-h^{\fr{\gamma+1}{2(\gamma-1)}}\bigg(\tilde\alpha -\fr{muh}{rc_2}h^{\fr{\gamma-3}{2(\gamma-1)}}\bigg)(P)<0,
\end{align*}
which contradicts \eqref{3.10}.

To obtain the lower bound of $u$, we integrate \eqref{2.9} along the 1-characteristic curve $r_1(t)$ to acquire
\begin{align}\label{3.13}
u(r,t)=& u_0(r_1(0))-\int_{0}^t h^{\fr{\gamma+1}{2(\gamma-1)}}\bigg(\tilde\alpha -\fr{muh}{rc_2}h^{\fr{\gamma-3}{2(\gamma-1)}}\bigg)(r_1(s),s)\ {\rm d}s \nonumber \\
> &-\underline{u}-\int_{0}^t\bigg(h\cdot h^{-\fr{\gamma-3}{2(\gamma-1)}}\tilde\alpha\bigg)(r_1(s),s)\ {\rm d}s \nonumber \\ >&-\underline{u}-\int_{0}^T\overline{h}\cdot\underline{h}^{-\fr{\gamma-3}{2(\gamma-1)}}  \alpha^*\ {\rm d}s \geq -(\underline{u}+\overline{h}),
\end{align}
by the fact $\underline{h}^{-\fr{\gamma-3}{2(\gamma-1)}}\alpha^*T\leq\fr{2}{\gamma-1}\leq1$.

(2) The estimates of $h$.
Suppose that there exists a point $P(r^*,t^*)\in\Omega_T$ such that $h(r^*,t^*)=\overline{h}$ and
\begin{align}\label{3.14}
\begin{split}
-(\underline{u}+\overline{h})\leq u(r,t)\leq-\overline{u},\quad e^{-1}\underline{h}\leq h(r,t)<\overline{h}, \\
c_2\leq -\fr{1}{2}\overline{u},\quad \alpha_*\leq\tilde{\alpha}(r,t)\leq\alpha^*,\quad \tilde{\beta}(r,t)\leq -\overline{\beta},
\end{split}
\quad \forall\ (r,t)\in\Omega_T\cap\{t<t^*\}.
\end{align}
Then
\begin{align}\label{3.15}
\pa_1h(P)\geq0.
\end{align}
On the other hand, we find by \eqref{3.14} that
\begin{align*}
\bigg(\tilde\alpha +\fr{mu^2}{rc_2}h^{\fr{\gamma-3}{2(\gamma-1)}}\bigg)(P) >\alpha_* -\fr{2m}{r_0}(\underline{u}+\overline{h})\overline{h}^{\fr{\gamma-3}{2(\gamma-1)}}=0,
\end{align*}
from which and \eqref{2.8} one has
\begin{align*}
\pa_1h(P)=-\fr{\gamma-1}{2}h^{\fr{\gamma+1}{2(\gamma-1)}}\bigg(\tilde\alpha +\fr{mu^2}{rc_2}h^{\fr{\gamma-3}{2(\gamma-1)}}\bigg)(P) <0,
\end{align*}
a contradiction.

Now we rewrite \eqref{2.8} as the following form
\begin{align*}
\pa_1\ln h=&-\fr{\gamma-1}{2}h^{-\fr{\gamma-3}{2(\gamma-1)}}\bigg(\tilde\alpha +\fr{mu^2}{rc_2}h^{\fr{\gamma-3}{2(\gamma-1)}}\bigg) \\
>& -\fr{\gamma-1}{2}h^{-\fr{\gamma-3}{2(\gamma-1)}}\tilde\alpha.
\end{align*}
Integrating the above along the 1-characteristic curve $r_1(t)$ leads to
\begin{align}\label{3.16}
h(r,t)\geq&h_0(r_1(0))\exp\bigg\{\int_{0}^t-\fr{\gamma-1}{2}h^{-\fr{\gamma-3}{2(\gamma-1)}}\tilde\alpha (r_1(t),t)\ {\rm d}t\bigg\} \nonumber \\
>& \underline{h}\exp\bigg\{-\fr{\gamma-1}{2}\underline{h}^{-\fr{\gamma-3}{2(\gamma-1)}}\alpha^* T\bigg\}\geq e^{-1}\underline{h}.
\end{align}

(3) The estimates of $c_2$. It is first suggests that
\begin{align*}
\bigg|\fr{mu[(\gamma-1)u-2h]}{(\gamma+1)rc_2} \bigg| =&\bigg|\fr{mu}{rc_2}\cdot\bigg(\fr{\gamma-1}{\gamma+1}u-\fr{2}{\gamma+1}h\bigg)\bigg| \\
\leq & \fr{2m\underline{u}}{r_0}(\underline{u}+\overline{h}),
\end{align*}
from which and \eqref{2.10} we see that
\begin{align}\label{3.17}
\pa_1c_2=&-\fr{\gamma+1}{2}h^{\fr{\gamma+1}{2(\gamma-1)}}\bigg(\tilde\alpha +\fr{mu[(\gamma-1)u-2h]}{(\gamma+1)rc_2}h^{\fr{\gamma-3}{2(\gamma-1)}}\bigg) \notag \\
<& -\fr{\gamma+1}{2}h^{\fr{\gamma+1}{2(\gamma-1)}}\bigg(\alpha_* -\bigg|\fr{mu[(\gamma-1)u-2h]}{(\gamma+1)rc_2}h^{\fr{\gamma-3}{2(\gamma-1)}}\bigg|\bigg) \notag \\
<& -\fr{\gamma+1}{2}h^{\fr{\gamma+1}{2(\gamma-1)}}\bigg(\alpha_* - \fr{2m\underline{u}}{r_0}(\underline{u}+\overline{h})\overline{h}^{\fr{\gamma-3}{2(\gamma-1)}}\bigg)
=0, \quad {\rm in\ the\ region}\ \Omega_T,
\end{align}
which implies that for any $(r,t)\in\Omega_T$
\begin{align}\label{3.18}
c_2(r,t)<\min_{r\in[r_1,r_2]}(u_0(r)+h_0(r))<-\overline{u}+\overline{h}<-\fr{1}{2}\overline{u}.
\end{align}

(4) The estimates of $\tilde{\alpha}$. Suppose that there exists a point $P(r^*,t^*)\in\Omega_T$ such that $\tilde{\alpha}(r^*,t^*)=\alpha_*$ and
\begin{align}\label{3.19}
\begin{split}
-(\underline{u}+\overline{h})\leq u(r,t)\leq-\overline{u},\quad e^{-1}\underline{h}\leq h(r,t)\leq\overline{h}, \\
c_2\leq -\fr{1}{2}\overline{u},\quad \alpha_*<\tilde{\alpha}(r,t)\leq\alpha^*,\quad \tilde{\beta}(r,t)\leq -\overline{\beta},
\end{split}
\ \ \forall\ (r,t)\in\Omega_T\cap\{t<t^*\}.
\end{align}
Then
\begin{align}\label{3.20}
\pa_2\tilde{\alpha}(P)\leq0.
\end{align}
On the other hand, we recall \eqref{a4} and employ \eqref{3.19} and \eqref{3.4} to achieve
\begin{align}\label{3.21}
\pa_2\tilde{\alpha}(P)=&\bigg\{ -\fr{\gamma+1}{4}h^{-\fr{\gamma-3}{2(\gamma-1)}}\tilde{\alpha}^2 +\fr{mc_1}{2rc_{2}^2}\bigg(\fr{\gamma-1}{2}u^2-h^2\bigg)\tilde{\beta} \\ & -\fr{m[(\gamma-3)u^2+(u-h)^2]}{2rc_2}\tilde{\alpha} \bigg\}(P) \nonumber \\ >&-\fr{\gamma+1}{4}h(P)^{-\fr{\gamma-3}{2(\gamma-1)}} \alpha_*^2 +\fr{mc_1}{2rc_{2}^2}\bigg(\fr{\gamma-1}{2}u^2-h^2\bigg)(P)\tilde{\beta}(P)  \nonumber \\
>& \fr{-mc_1}{2rc_{2}^2}(u^2-h^2)(P)\overline{\beta} -\fr{\gamma+1}{4}\underline{h}^{-\fr{\gamma-3}{2(\gamma-1)}} \alpha_*^2 \nonumber \\
>& \fr{m\underline{u}}{2r_2}\overline{\beta} -\fr{\gamma+1}{4}\underline{h}^{-\fr{\gamma-3}{2(\gamma-1)}} \alpha_*^2 \notag \\
\geq& (\underline{\alpha}^2-\alpha_*^2)\underline{h}^{-\fr{\gamma-3}{2(\gamma-1)}}\geq0,
\end{align}
which leads to a contradiction with \eqref{3.20}.

To establish the upper bound of $\tilde{\alpha}$, we first derive by \eqref{2.6} and \eqref{2.1}
\begin{align}\label{3.22}
\pa_2\bigg(h^{-\fr{\gamma+1}{2(\gamma-1)}}\tilde{\alpha}\bigg) =&\bigg\{\fr{\gamma+1}{4}h^{-\fr{\gamma-3}{2(\gamma-1)}}\tilde\alpha +\fr{mc_1}{2rc_{2}^2}\bigg(\fr{\gamma-1}{2}u^2-h^2\bigg)\bigg\}h^{-\fr{\gamma+1}{2(\gamma-1)}} (\tilde{\beta}-\tilde{\alpha}) \notag \\
&+\fr{mu^2[c_{2}^2+(\gamma-3)h^2]}{rc_1c_{2}^2} h^{-\fr{\gamma+1}{2(\gamma-1)}}\tilde{\alpha}
\end{align}
Note that
\begin{align}\label{3.23}
&\fr{\gamma+1}{4}h^{-\fr{\gamma-3}{2(\gamma-1)}}\tilde\alpha +\fr{mc_1}{2rc_{2}^2}\bigg(\fr{\gamma-1}{2}u^2-h^2\bigg) \notag \\
> & \overline{h}^{-\fr{\gamma-3}{2(\gamma-1)}}\alpha_* +\fr{mc_{1}^2}{2rc_{2}} \notag \\ >& \fr{2m(\underline{u}+\overline{h})}{r_0} -\fr{3m(\underline{u}+\overline{h})}{2r_0}>0,
\end{align}
where we used the fact
$$
1<\fr{c_1}{c_2}=\fr{u-h}{u+h}\leq \fr{u+\fr{1}{2}u}{u-\fr{1}{2}u}=3.
$$
One combines \eqref{3.22} and \eqref{3.23} to obtain
\begin{align}\label{3.24}
\pa_2\bigg(h^{-\fr{\gamma+1}{2(\gamma-1)}}\tilde{\alpha}\bigg)<0,\quad \forall\ (r,t)\in\Omega_T.
\end{align}
Thus we have
$$
h^{-\fr{\gamma+1}{2(\gamma-1)}}\tilde{\alpha} <h^{-\fr{\gamma+1}{2(\gamma-1)}}\tilde{\alpha}(r_2(0),0)\leq \underline{h}^{-\fr{\gamma+1}{2(\gamma-1)}}\overline{\alpha},
$$
which gives
\begin{align}\label{3.25}
\tilde{\alpha}<h^{\fr{\gamma+1}{2(\gamma-1)}} \cdot\underline{h}^{-\fr{\gamma+1}{2(\gamma-1)}}\overline{\alpha} <\bigg(\fr{\overline{h}}{\underline{h}}\bigg)^{\fr{\gamma+1}{2(\gamma-1)}}\overline{\alpha}=\alpha^*.
\end{align}
Here $r_2(t)$ is the 2-characteristic curve from the point $(r,t)$ up to the line $t=0$.

(5) The estimates of $\tilde{\beta}$. We note that each of the three terms at the right-hand of $\pa_1\tilde{\beta}$ in \eqref{a4} is negative. Thus one has
\begin{align*}
\pa_1\tilde{\beta}<0,
\end{align*}
which indicates that
$$
\tilde{\beta}(r,t)<-\overline{\beta},\quad \forall\ (r,t)\in\Omega_T.
$$
The proof of the lemma is complete.
\end{proof}

Based on Lemma \ref{lem1}, we have the following theorem.
\begin{thm}\label{thm1}
Let the conditions in Lemma \ref{lem1} be satisfied. We further assume that
\begin{align}\label{3.26}
\tilde{\beta}_0(r_*)\leq -\max\bigg\{\overline{\beta}\cdot\overline{h}^{-\fr{\gamma-3}{2(\gamma-1)}}, \fr{1}{T}, \fr{\underline{u}+2\overline{h}}{r_*-r_1}\bigg\}:=-N(T,r_*),
\end{align}
for some number $r_*\in(r_1,r_2)$. Then singularity forms before time $\fr{1}{N(T,r_*)}\leq T$.
\end{thm}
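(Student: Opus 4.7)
The plan is to apply the Riccati blow-up mechanism to $\tilde{\beta}$ along the 1-characteristic emanating from $(r_*,0)$. Lemma \ref{lem1} will supply the invariant bounds \eqref{3.8} on $\Omega_T$, and since $N\geq 1/T$, the target horizon $[0,1/N]$ automatically fits inside $[0,T]$.

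First I would trap the 1-characteristic $r=r_1^*(t)$ issuing from $(r_*,0)$ inside $\Omega_T$ on $[0,1/N]$. The right boundary $\bar{r}_2(t)$ is itself a 1-characteristic, so $r_1^*(t)<\bar{r}_2(t)$ holds automatically while the smooth solution persists. For the left boundary, the invariant bounds give $|c_1|=|u-h|\leq\underline{u}+2\overline{h}$, hence $r_1^*(t)\geq r_*-(\underline{u}+2\overline{h})\,t$; meanwhile $\bar{r}_1(t)\leq r_1$ because $c_2<0$ makes $\bar{r}_1$ monotonically decreasing. The third entry $(\underline{u}+2\overline{h})/(r_*-r_1)$ in the definition of $N$ is exactly what forces $r_*-(\underline{u}+2\overline{h})/N\geq r_1\geq\bar{r}_1(t)$, so $r_1^*(t)\in[\bar{r}_1(t),\bar{r}_2(t)]$ throughout $[0,1/N]$ and the invariant bounds from Lemma \ref{lem1} apply along $r_1^*$ on that horizon.

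Next I would derive the Riccati inequality. By the observation already used in Step (5) of the proof of Lemma \ref{lem1}, the last two terms on the right-hand side of the $\pa_1\tilde{\beta}$ equation in \eqref{a4} are non-positive inside $\Omega_T$, so
\[
\pa_1 \tilde{\beta} \;\leq\; -\frac{\gamma+1}{4}\, h^{-\frac{\gamma-3}{2(\gamma-1)}}\, \tilde{\beta}^2 \;\leq\; -\frac{\gamma+1}{4}\, \overline{h}^{-\frac{\gamma-3}{2(\gamma-1)}}\, \tilde{\beta}^2,
\]
the second inequality using $h<\overline{h}$ together with $\gamma\geq 3$ (so the exponent is non-positive). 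Setting $\Phi(t)=-1/\tilde{\beta}(r_1^*(t),t)>0$, the Riccati comparison yields $\Phi'(t)\leq -\frac{\gamma+1}{4}\,\overline{h}^{-\frac{\gamma-3}{2(\gamma-1)}}$, so $\Phi$ decreases linearly and crosses zero by time
\[
t^*\;\leq\;\frac{4\,\overline{h}^{\frac{\gamma-3}{2(\gamma-1)}}}{(\gamma+1)\,|\tilde{\beta}_0(r_*)|},
\]
at which moment $\tilde{\beta}\to-\infty$, i.e.\ a derivative of the smooth solution blows up and the singularity forms. The first entry $\overline{\beta}\,\overline{h}^{-\frac{\gamma-3}{2(\gamma-1)}}$ in $N$ is engineered to compensate the weight $\overline{h}^{\frac{\gamma-3}{2(\gamma-1)}}$ appearing in the numerator of $t^*$, yielding $t^*\leq 1/N$.

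The main obstacle, as I see it, is coordinating the three thresholds in $N$: $1/T$ keeps $[0,1/N]\subset[0,T]$; $(\underline{u}+2\overline{h})/(r_*-r_1)$ prevents the 1-characteristic from exiting $\Omega_T$ through the left boundary $\bar{r}_1$ before the Riccati blows up; and $\overline{\beta}\,\overline{h}^{-\frac{\gamma-3}{2(\gamma-1)}}$ drives the Riccati blow-up within $1/N$. Each is tight for a different ingredient of the argument, and taking the maximum of the three is exactly $N$. A secondary subtlety is that the 1-characteristic and the 2-char $\bar{r}_1$ both move leftward, with the former potentially faster (the difference in speeds is $c_1-c_2=-2h<0$); the threshold $(\underline{u}+2\overline{h})/(r_*-r_1)$ is what makes this interplay work out cleanly.
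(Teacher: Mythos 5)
Your proposal follows the paper's own proof essentially step for step: trap the 1-characteristic through $(r_*,0)$ inside $\Omega_T$ on $[0,1/N]$ (the second entry of $N$ keeps the horizon within $[0,T]$, and the third keeps the curve to the right of $r_1\ge\bar r_1(t)$), then drop the two non-positive lower-order terms of the $\pa_1\tilde\beta$ equation in \eqref{a4} to obtain a pure quadratic Riccati inequality, and integrate. Your slight variations are harmless and, if anything, cleaner: you retain the factor $\frac{\gamma+1}{4}$ where the paper discards it via $\frac{\gamma+1}{4}\ge1$ for $\gamma\ge3$, and you make explicit that $\bar r_2$ is itself a 1-characteristic so the curve cannot exit through the right boundary, a point the paper leaves tacit.

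One part of your narrative does not hold up, though. You attribute $t^*\le 1/N$ to the first entry $\overline\beta\,\overline{h}^{-\frac{\gamma-3}{2(\gamma-1)}}$ of $N$ ``compensating the weight,'' but the arithmetic does not close that way. Your bound is $t^*\le \frac{4}{\gamma+1}\,\overline{h}^{\frac{\gamma-3}{2(\gamma-1)}}\big/|\tilde\beta_0(r_*)|$; using $|\tilde\beta_0(r_*)|\ge N$ gives $t^*\le \frac{4}{\gamma+1}\,\overline{h}^{\frac{\gamma-3}{2(\gamma-1)}}/N$, while using the first entry alone gives $t^*\le \frac{4}{\gamma+1}\,\overline{h}^{\frac{\gamma-3}{\gamma-1}}/\overline\beta$. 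Neither is manifestly $\le 1/N$ unless $\overline{h}^{\frac{\gamma-3}{2(\gamma-1)}}\le\frac{\gamma+1}{4}$, which is automatic at $\gamma=3$ but is an additional smallness restriction on $\overline{h}$ when $\gamma>3$. To be fair, the paper's own final step exhibits the same wrinkle: after integrating \eqref{3.27}, the denominator in \eqref{3.28} vanishes at $t=\overline{h}^{\frac{\gamma-3}{2(\gamma-1)}}/|\tilde\beta_0(r_*)|$, which is only bounded by $\overline{h}^{\frac{\gamma-3}{2(\gamma-1)}}/N$ rather than $1/N$. So your reading tracks the source faithfully; just do not present the first entry of $N$ as the ingredient that closes the estimate, because by itself it does not.
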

\begin{proof}
According to Lemma \ref{lem1}, we know that the smooth solution, if it exists, satisfies the estimates in \eqref{3.8} in the region $\Omega_T$. From the point $(r_*,0)$, we draw the 1-characteristic curve $r=r_*(t)$. Then
\begin{align}
r_*(t)=&r_*+\int_{0}^t(u-h)(r_*(s),s)\ {\rm d}s \nonumber \\
\geq & r_*-(\underline{u}+2\overline{h})t, \nonumber
\end{align}
from which one sees that
$$
r_*(t)\geq r_1,\quad \forall\ t\leq \fr{1}{N(T,r_*)}.
$$
This implies that the curve $r=r_*(t)$ is in the region $\Omega_T$ before the time $\fr{1}{N(T,r_*)}$. Thus along the curve $r=r_*(t)$, we have by \eqref{3.8}
$$
\fr{mc_2}{2rc_{1}^2}\bigg(\fr{\gamma-1}{2}u^2-h^2\bigg)\tilde{\alpha}<0,\quad -\fr{m[(\gamma-3)u^2+(u-h)^2]}{2rc_2}\tilde{\alpha}<0,
$$
from which and \eqref{a4} one obtains
\begin{align}\label{3.27}
\pa_1\tilde{\beta}(r_*(t),t)<&-\fr{\gamma+1}{4}h^{-\fr{\gamma-3}{2(\gamma-1)}}\tilde{\beta}^2(r_*(t),t) \notag \\
<& -\overline{h}^{-\fr{\gamma-3}{2(\gamma-1)}}\tilde{\beta}^2(r_*(t),t),\quad {\rm for}
\quad t\leq \fr{1}{N(T,r_*)}.
\end{align}
Integrate \eqref{3.27} yields
\begin{align}\label{3.28}
-\tilde{\beta}(r_*(t),t)>\fr{-\beta_0(r_*)}{1+\beta_0(r_*)\overline{h}^{-\fr{\gamma-3}{2(\gamma-1)}}t},
\end{align}
which indicates that the smooth solution forms singularity before time $\fr{1}{N(T,r_*)}$. The proof of the theorem is complete.
\end{proof}

\section*{Acknowledgements}

The first and second authors were partially supported by National Science Foundation (DMS-2008504, DMS-2306258), the third author was partially supported by National Natural
Science Foundation of China (12171130) and Natural Science Foundation of
Zhejiang province of China (LMS25A010014), the fourth author was partially supported by National Science Foundation (DMS-2206218).

\section*{Data Availability Statement}
No data was used for the research described in the paper.

\section*{Statements and Declarations}

The authors declare that they have no conflict of interest.


\begin{thebibliography}{90}
\addtolength{\itemsep}{-0.7em}

\bibitem{Ali} S. Alinhac, \emph{Blowup for Nonlinear Hyperbolic Equations}, Progress in Nonlinear
    Differential Equations and Their Applications 17, Birkh\"auser, Boston, 1995.


\bibitem{Vicol1} T. Buckmaster, S. Shkoller and V. Vicol, Formation of shocks for 2d isentropic
    compressible Euler, \emph{Commun. Pure Appl. Math.}, {\bf 75} (2022), no. 9, 2069--2120.

\bibitem{Vicol2} T. Buckmaster, S. Shkoller and V. Vicol, Shock formation and vorticity creation for
    3d Euler, \emph{Commun. Pure Appl. Math.}, {\bf 76} (2023), no. 9, 1965--2072.

\bibitem{Vicol3} T. Buckmaster, S. Shkoller and V. Vicol, Formation of point shocks for 3D
    compressible Euler, \emph{Commun. Pure Appl. Math.}, {\bf 76} (2023), no. 9, 2073--2191.


\bibitem{CCW} H. Cai, G. Chen and T. Wang, Singularity formation for radially symmetric
    expanding wave of compressible Euler equations, \emph{SIAM J. Math. Anal.}, {\bf 55} (2023), no.
    4, 2917--2947.




\bibitem{G9} G. Chen, Optimal time-dependent lower bound on density for
classical solutions of 1-D compressible Euler equations, \emph{Indiana Univ. Math. J.}, {\bf 66}
(2017), no. 3, 725--740.


\bibitem{G10} G. Chen, Optimal time-dependent density lower bound for nonisentropic gas dynamics,
    \emph{J. Differential Equations}, {\bf 268} (2020), no. 7, 4017--4028.

\bibitem{CCZ} G. Chen, G.Q. Chen and S. Zhu, Formation of singularities and existence of
    global continuous solution for the compressible Euler equations, \emph{SIAM J. Math. Anal.},
{\bf 53} (2021), no. 6, 6280--6325.

\bibitem{CEHS} G. Chen, F. El-Katri, Y. Hu and Y. Shen, Global solution and singularity formation for
the supersonic expanding wave of compressible Euler equations with radial symmetry, \emph{J. London Math. Soc.}, {\bf 111} (2025), no. 2, e70208.

\bibitem{CGH} G. Chen, H. Guo and Y. Hu, Global smooth solutions and singularity formation for the relativistic Euler equations with radial symmetry, \emph{Canadian J. Math.}, (2025), doi: 10.4153/S0008414X25101648.

\bibitem{CPZ} G. Chen, R. Pan and S. Zhu, Singularity formation for compressible Euler
    equations,  \emph{SIAM J. Math. Anal.}, {\bf 49} (2017), no. 4, 2591--2614.

\bibitem{CY} G. Chen and R. Young, Shock-free solutions of the compressible Euler equations, \emph{Arch. Ration. Mech. Anal.}, {\bf 217} (2015), no. 3, 1265--1293.

\bibitem{CYZ} G. Chen, R. Young and Q. Zhang, Shock formation in the compressible Euler equations
and related systems, \emph{J. Hyperbolic Differ. Equ.}, {\bf 10} (2013), no. 1, 149--172.

\bibitem{Ch1} D. Christodoulou, \emph{The Shock Development Problem}, EMS Monographs in Mathematics,
European Mathematical Society (EMS), Z\"urich, 2019.

\bibitem{Ch2} D. Christodoulou and S. Miao, \emph{Compressible Flow and Euler's Equations}, Surveys of Modern Mathematics 9, International Press, Somerville, MA; Higher Education Press, Beijing, 2014.

\bibitem{courant} R. Courant and K. Friedrichs, \emph{Supersonic Flow
and Shock Waves}, Wiley-Interscience, New York, 1948.


\bibitem{John} F. John, Formation of singularities in one-dimensional nonlinear wave propagation,  \emph{Comm. Pure Appl. Math.}, {\bf 27} (1974), no. 3, 377--405.

\bibitem{Kong} D. Kong, Formation and propagation of singularities for 2x2 quasilinear hyperbolic systems, \emph{Trans. Amer. Math. Soc.}, {\bf 354} (2002), no. 4, 3155--3179.


\bibitem{lax2} P. Lax, Development of singularities of solutions of nonlinear hyperbolic partial differential equations, \emph{J. Math. Phys.}, {\bf 5} (1964), no. 5, 611--614.





\bibitem{Liu1} T.P. Liu, The development of singularities in the nonlinear waves for quasi-linear hyperbolic partial differential equations, \emph{J. Differential Equations}, {\bf 33} (1979), no. 1, 92--111.

\bibitem{Luk1} J. Luk and J. Speck, Shock formation in solutions to the 2D compressible Euler
    equations in the presence of non-zero vorticity, \emph{Invent. Math.}, {\bf 214} (2018), no. 1, 1--169.

\bibitem{Luk2} J. Luk and J. Speck, The stability of simple plane-symmetric shock formation for  three-dimensional compressible Euler flow with vorticity and entropy, \emph{Analysis \& PDE}, {\bf 17} (2024), no. 3, 831-941.


\bibitem{Vicol5} I. Neal, C. Rickard, S. Shkoller and V. Vicol, A new type of stable shock formation
    in gas dynamics, \emph{Commun. Pure Appl. Anal.}, {\bf 23} (2024), no. 10, 1423--1447.

\bibitem{Vicol4} I. Neal, S. Shkoller and V. Vicol, A characteristics approach to shock formation in
    2D Euler with azimuthal symmetry and entropy, \emph{Commun. Anal. Mech.}, {\bf 17} (2025), no. 1,
    188--236.

\bibitem{sideris1} T. Sideris, Formation of singularities in three-dimensional compressible fluids, \emph{Commun. Math. Phys.}, {\bf 101} (1985), no. 4, 475--485.


\bibitem{sideris2} T. Sideris, Delay singularity formation in 2D compressible flow, \emph{Amer. J. Math.}, {\bf 119} (1997), no. 2, 371--422.


\bibitem{Whi} G. Whitham, \emph{Linear and Nonlinear Waves}, Wiley-Interscience, New York, 1973.




\end{thebibliography}
\end{document}